\newcommand{\Spec}{\operatorname{Spec}}
\renewcommand{\phi}{\varphi}
\newcommand{\Ker}{\operatorname{Ker}}
\newcommand{\Ima}{\operatorname{Im}}
\newcommand{\Min}{\operatorname{Min}}
\newtheorem{proposition}{Proposition}[section]
\newtheorem{lemma}[proposition]{Lemma}
\newtheorem{corollary}[proposition]{Corollary}
\newtheorem{theorem}[proposition]{Theorem}
\theoremstyle{definition}
\newtheorem{remark}[proposition]{Remark}
\patchcmd{\@settitle}{\uppercasenonmath\@title}{}{}{}
\patchcmd{\@setauthors}{\MakeUppercase}{}{}{}
\begin{document}

\title[Infinite Zariski closure]{Spectral closures of an infinite subset of the prime spectrum}

\author[A. Tarizadeh]{Abolfazl Tarizadeh}
\address{Department of Mathematics, Faculty of Basic Sciences, University of Maragheh \\ P.O. Box 55136-553, Maragheh, Iran.}
\email{ebulfez1978@gmail.com \\
atarizadeh@maragheh.ac.ir}
\date{}
\subjclass[2010]{14A05, 14A25, 13A15, 14R99, 13B30}
\keywords{Infinite direct products of rings; Infinite Zariski closure; Infinite patch closure; Infinite flat closure}

\begin{abstract}
In the literature, there is no known general method (formula) to compute the Zariski closure of an ``infinite'' subset of the prime spectrum. This problem indeed deals with the prime ideals of an infinite direct product of nonzero commutative rings that are very complicated to understand (the structure of most of them is unknown). 
In this article, by appealing to the patch closure and using laying over minimal prime technique, we overcome the above obstacle and then obtain new and quite useful results for computing the Zariski and flat closures of an infinite subset of the prime spectrum.
\end{abstract}

\maketitle

\section{Introduction}

If $\mathfrak{p}$ is a prime ideal of a commutative ring $R$, then the Zariski closure of $\{\mathfrak{p}\}$ in $\Spec(R)$ equals to $\Ima\pi^{\ast}=V(\mathfrak{p})$ where $\pi: R\rightarrow R/\mathfrak{p}$ is the canonical ring map. We know that the structure of all (prime) ideals of any finite direct product ring $S=R_{1}\times\cdots\times R_{n}$ can be precisely described in terms of the (prime) ideals of the factor rings $R_{k}$.   
In fact, it can be easily seen that the ideals of $S$ are precisely of the form $I_{1}\times\cdots\times I_{n}$ where each $I_{k}$ is an ideal of $R_{k}$, and the prime ideals of $S$ are precisely of the form $\pi_{k}^{-1}(\mathfrak{p})$ where $\pi_{k}:S\rightarrow R_{k}$ is the projection map and $\mathfrak{p}$ is a prime ideal of $R_{k}$ for some $k$ (note that $\pi_{k}^{-1}(\mathfrak{p})=I_{1}\times\cdots\times I_{n}$ with $I_{k}=\mathfrak{p}$ and $I_{i}=R_{i}$ for all $i\neq k$). Hence, if $E=\{\mathfrak{p}_{1},\ldots,\mathfrak{p}_{n}\}$ is a finite set of prime ideals of a ring $R$ then it can be easily seen that the Zariski closure of $E$ in $\Spec(R)$ equals $\Ima\pi^{\ast}=\bigcup\limits_{i=1}^{n}V(\mathfrak{p}_{i})=
V(\bigcap\limits_{k=1}^{n}\mathfrak{p}_{k})$ where $\pi:R\rightarrow R/\mathfrak{p}_{1}\times\cdots\times R/\mathfrak{p}_{n}$ is the canonical ring map. Then quite inspired by this simple observation, we prove a similar result for any ``infinite'' subset $E$ of $\Spec(R)$. But before stating the theorem, we must note that the situation in the infinite case, that is, in the infinite direct product ring $\prod\limits_{\mathfrak{p}\in E}R/\mathfrak{p}$, is extremely complicated. Because the structure of all prime ideals of an infinite direct product of nonzero commutative rings is unknown (it is still unclear how the form of every prime ideal in an infinite direct product ring looks like). In addition to this, infinite direct products of commutative rings (even fields) have a very huge number of prime ideals. For example, the number of maximal ideals of the direct product ring  $\prod\limits_{n\geqslant1}\mathbb{Z}/p^{n}\mathbb{Z}$ with $p$ a prime number is uncountable and equals $2^{\mathfrak{c}}$ where $\mathfrak{c}$ is the cardinality of the continuum (the Krull dimension of this ring is also infinite, although the prime spectrum of $\mathbb{Z}/p^{n}\mathbb{Z}$ is singleton and hence its Krull dimension is zero for all $n$).
In spite of this intricacy, by appealing to the patch closure and using laying over minimal prime technique, we succeeded to show that for any commutative ring $R$ if $E$ is an infinite subset of $\Spec(R)$ then $\Ima\pi^{\ast}\subseteq\overline{E}$ where $\overline{E}$ denotes the Zariski closure of $E$ in $\Spec(R)$ and $\pi:R\rightarrow\prod\limits_{\mathfrak{p}\in E}R/\mathfrak{p}$ is the canonical ring map. We also show with an example that the inclusion $\Ima\pi^{\ast}\subseteq\overline{E}$ can be strict. Hence, our theorem (Theorem \ref{Theorem 8 Z closure}) is the most general result that allows us to approximate the Zariski closure of an infinite subset of the prime spectrum as accurately as possible. 

Next we investigate the dual version of the above result, namely computing the flat closure of an infinite subset of the prime spectrum. First recall that for any commutative ring $R$ there exists a (unique) topology over $\Spec(R)$ such that the collection of $V(a)$ with $a \in R$ forms a sub-basis for its opens. Hence, the collection of $V(I)$ with $I$ a finitely generated ideal of $R$ forms a basis for the opens of this topology. This topology is called the flat (or, inverse) topology which is the dual of the Zariski topology. 
If $\mathfrak{p}$ is a prime ideal of a ring $R$, then it can be seen that the flat closure of $\{\mathfrak{p}\}$ in $\Spec(R)$ equals $\Lambda(\mathfrak{p}):=\Ima\pi^{\ast}=
\{\mathfrak{q}\in\Spec(R):\mathfrak{q}
\subseteq\mathfrak{p}\}$ where $\pi: R\rightarrow R_{\mathfrak{p}}$ is the canonical ring map. Similarly above, if $E=\{\mathfrak{p}_{1},\ldots,\mathfrak{p}_{n}\}$ is a finite set of prime ideals of $R$ then  
the flat closure of $E$ in $\Spec(R)$ equals $\Ima\pi^{\ast}=
\bigcup\limits_{i=1}^{n}\Lambda(\mathfrak{p}_{i})$ where $\pi:R\rightarrow R_{\mathfrak{p}_{1}}\times\cdots\times R_{\mathfrak{p}_{n}}$ is the canonical ring map. But if $E$ is an infinite subset of $\Spec(R)$ then in order to compute the flat closure of $E$ in $\Spec(R)$ we have to deal, similarly above, with the infinite direct product ring $\prod\limits_{\mathfrak{p}\in E}R_{\mathfrak{p}}$ whose structure of most of its prime ideals (as stated in the above) is unknown. But by passing to the patch closure, we succeeded to show that  $\Ima\pi^{\ast}\subseteq c\ell(E)$ where $c\ell(E)$ denotes the flat closure of $E$ in $\Spec(R)$ and $\pi:R\rightarrow\prod\limits_{\mathfrak{p}\in E}R_{\mathfrak{p}}$ is the canonical ring map. We also show with an example that the inclusion $\Ima\pi^{\ast}\subseteq c\ell(E)$ can be strict. In the subsequent results (Theorems \ref{Theorem 11 ii}, \ref{Theorem 13-TH} and Corollaries \ref{Lemma i1}, \ref{Corollary 4 fd}) we obtain more observations on the Zariski and flat closures of some infinite subsets where the rings involved are certain rings. These results are also supplemented by Theorem \ref{Theorem supplement}, which provides an interesting example of such rings.   

\section{Preliminaries}

In this article, all rings are assumed to be commutative. If $r$ is an element of a ring $R$, then $D(r)=\{\mathfrak{p}\in\Spec(R): r\notin\mathfrak{p}\}$ and $V(r)=\Spec(R)\setminus D(r)$. Every ring map $\phi:R\rightarrow R'$ induces a map $\phi^{\ast}:\Spec(R')\rightarrow\Spec(R)$
between the corresponding prime spectra which is given by $\mathfrak{p}\mapsto\phi^{-1}(\mathfrak{p})$. 

Recall that for any ring $R$, there exists a (unique) topology over $\Spec(R)$ such that the collection of $D(a) \cap V(b)$ with $a,b \in R$ forms a sub-basis for its opens. It follows that the collection of $D(a) \cap V(I)$ with $a \in R$ and $I$ a finitely generated ideal of $R$ forms a basis for the opens of this topology. This topology is called the patch (or, constructible) topology. It can be seen that the patch closed subsets of $\Spec(R)$ are precisely of the form $\Ima\phi^{\ast}$ where $\phi:R\rightarrow A$ is a ring map. For further information on the patch and flat topologies see e.g. \cite{Tarizadeh 4}, \cite{Tarizadeh 3} and \cite{Tarizadeh 2}.

Let $R$ be a ring and $E$ a subset of $\Spec(R)$. We say that $E$ is stable under specialization if whenever $\mathfrak{p}\in E$ and $\mathfrak{q}$ is a prime ideal of $R$ with $\mathfrak{p}\subseteq\mathfrak{q}$, then $\mathfrak{q}\in E$. Dually, we say that $E$ is stable under generalization if whenever $\mathfrak{p}\in E$ and $\mathfrak{q}$ is a prime ideal of $R$ with $\mathfrak{q}\subseteq\mathfrak{p}$, then $\mathfrak{q}\in E$. It can be seen that a patch closed subset $E$ of $\Spec(R)$ is Zariski closed if and only if $E$ is stable under specialization. Dually, a patch closed subset $E$ of $\Spec(R)$ is flat closed if and only if $E$ is stable under generalization. 

Let $(R_{i})$ be a family of nonzero rings indexed by a set $S$ and let $R=\prod\limits_{i\in S}R_{i}$ be their direct product ring. If $\mathfrak{p}$ is a prime ideal of $R_k$ for some $k\in S$, then we call $\pi^{-1}_{k}(\mathfrak{p})$ a \emph{tame prime of $R$} where $\pi_{k}:R\rightarrow R_{k}$ is the projection map.
By a \emph{wild prime of $R$} we mean a prime ideal of $R$ which is not tame, i.e., it is not of the form $\pi^{-1}_{k}(\mathfrak{p})$. For each $k\in S$, we call the element $e_{k}:=(\delta_{i,k})_{i\in S}$ the $k$-th unit idempotent of $R=\prod\limits_{i\in S}R_{i}$ where $\delta_{i,k}$ is the Kronecker delta. The direct sum ideal $I=\bigoplus\limits_{i\in S} R_{i}$ of $R$ is generated by the idempotents $e_{i}$, and it can be seen that a prime ideal $P$ of $R$ is a wild prime if and only if $P\in V(I)$. 
By a minimal tame prime of $R$ we mean a tame prime of $R$ which is also a minimal prime of $R$. It can be easily seen that the minimal tame primes of $R$ are precisely of the form $\pi^{-1}_{k}(\mathfrak{p})$ where $\mathfrak{p}$ is a minimal prime of $R_{k}$ for some $k$. For more information on the tame and wild primes we refer the interested reader to \cite{Tarizadeh-Shir}. 

\begin{remark}\label{Remark iv lying over} (Laying over minimal prime) Remember that if $\phi:A\rightarrow B$ is an injective ring map and $\mathfrak{p}$ is a minimal prime ideal of $A$, then there exists a prime ideal $\mathfrak{q}$ of $B$ which laying over $\mathfrak{p}$, i.e., $\mathfrak{p}=\phi^{-1}(\mathfrak{q})$. Indeed, consider the following commutative (pushout) diagram: $$\xymatrix{
A\ar[r]^{\phi}\ar[d]^{\pi} &B\ar[d]^{}\\A_{\mathfrak{p}}\ar[r]^{}&
A_{\mathfrak{p}}\otimes_{A}B}$$ where $\pi$ and the unnamed arrows are the canonical maps. Clearly $A_{\mathfrak{p}}\otimes_{A}B\simeq B_{\mathfrak{p}}$ is a nonzero ring, since $\phi$ is injective. So it has a prime ideal $P$. The contraction of $P$ under the canonical ring map $A_{\mathfrak{p}}\rightarrow A_{\mathfrak{p}}\otimes_{A}B$ equals $\mathfrak{p}A_{\mathfrak{p}}$, because $\mathfrak{p}$ is a minimal prime of $A$. Let $\mathfrak{q}$ be the contraction of $P$ under the canonical ring map $B\rightarrow A_{\mathfrak{p}}\otimes_{A}B$ which is a prime ideal of $B$. Now we have $\phi^{-1}(\mathfrak{q})=
\pi^{-1}(\mathfrak{p}A_{\mathfrak{p}})=\mathfrak{p}$.
\end{remark}

The nil-radical of a ring $R$ is denoted by $\mathfrak{N}(R)$, and the Jacobson radical of $R$ is denoted by $\mathfrak{J}(R)$. By a von-Neumann regular ring we mean a ring $R$ such that for each $a\in R$ there exists some $b\in R$ such that $a(1-ab)=0$. It can be easily seen that every von-Neumann regular ring is a reduced zero-dimensional ring. It is also easy to check that every direct product of von-Neumann regular rings is a von-Neumann regular ring. In particular, every direct product of fields is a von-Neumann regular ring, and hence it is a (reduced) zero-dimensional ring. 

\section{Infinite Zariski and flat closures}

We begin with the following result which is already known for finite products. By $T(R)$ we mean the total ring of fractions of $R$.

\begin{lemma}\label{Lemma T-S} For $R=\prod\limits_{k\in S}R_{k}$, we have the canonical isomorphism of rings $T(R)\simeq\prod\limits_{k\in S}T(R_{k})$.
\end{lemma}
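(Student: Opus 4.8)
The plan is to realize $T(R)$ as a localization and to pin down, once and for all, the multiplicative set being inverted. Recall that $T(R)=\Sigma^{-1}R$, where $\Sigma$ is the set of all regular elements (non-zero-divisors) of $R$, and likewise $T(R_{k})=\Sigma_{k}^{-1}R_{k}$ with $\Sigma_{k}$ the regular elements of $R_{k}$. The whole argument rests on the observation that $\Sigma$ is nothing but the full product $\prod_{k\in S}\Sigma_{k}$; once this is in hand, the desired isomorphism is the evident coordinatewise map.

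First I would establish the characterization: an element $s=(s_{k})_{k\in S}$ of $R$ is regular if and only if each $s_{k}$ is regular in $R_{k}$. For the nontrivial implication, suppose some $s_{j}$ is a zero-divisor in $R_{j}$, say $s_{j}y=0$ with $0\neq y\in R_{j}$; then the element $x\in R$ with $x_{j}=y$ and $x_{i}=0$ for all $i\neq j$ is nonzero and satisfies $sx=0$, so $s$ is a zero-divisor in $R$. Conversely, if $s$ is a zero-divisor, say $sx=0$ with $x=(x_{k})\neq 0$, then choosing an index $j$ with $x_{j}\neq 0$ gives $s_{j}x_{j}=0$, so $s_{j}$ is a zero-divisor in $R_{j}$. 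Hence $\Sigma=\prod_{k\in S}\Sigma_{k}$.

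Next I would write down the natural map $\phi:T(R)\rightarrow\prod_{k\in S}T(R_{k})$ sending $r/s\mapsto(r_{k}/s_{k})_{k\in S}$; this is meaningful precisely because every $s_{k}$ is regular in $R_{k}$ by the previous step, and it is a well-defined ring homomorphism, since a relation $t(rs'-r's)=0$ in $R$ with $t\in\Sigma$ holds coordinatewise and each $t_{k}$ is regular, forcing $r_{k}s'_{k}=r'_{k}s_{k}$. Injectivity is immediate: if $r_{k}/s_{k}=0$ in $T(R_{k})$ for all $k$, then some regular element of $R_{k}$ annihilates $r_{k}$, whence $r_{k}=0$, and so $r=0$.

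The only step that genuinely uses the product structure---and the point where the infinite case could a priori fail, since localization does not commute with infinite products in general---is surjectivity. Given an arbitrary element $(a_{k}/b_{k})_{k\in S}$ of $\prod_{k\in S}T(R_{k})$ with each $b_{k}\in\Sigma_{k}$, I would assemble the global numerator $a=(a_{k})_{k\in S}$ and the global denominator $b=(b_{k})_{k\in S}$. The crucial observation is that $b$ is a regular element of $R$ by the characterization above, so $a/b$ is a legitimate element of $T(R)$, and by construction $\phi(a/b)=(a_{k}/b_{k})_{k\in S}$. Thus $\phi$ is the required isomorphism. I expect the only real subtlety to be the regular-element characterization, which is exactly what allows one to form a single denominator $b\in R$ out of the infinitely many local denominators $b_{k}$.
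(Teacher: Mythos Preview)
Your argument is correct and is precisely the ``straightforward'' verification the paper has in mind; the paper's own proof consists solely of the words ``It is straightforward,'' and you have supplied the expected details (the componentwise characterization of regular elements and the resulting coordinatewise isomorphism). There is nothing to compare: your route is the canonical one.
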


\begin{proof} It is straightforward. 
\end{proof}

If $\mathfrak{p}$ is a prime ideal of a ring $R$, then $\kappa(\mathfrak{p})$ denotes the residue field of $R$ at $\mathfrak{p}$, i.e., $\kappa(\mathfrak{p})
=R_{\mathfrak{p}}/\mathfrak{p}R_{\mathfrak{p}}$.

\begin{corollary}\label{Coro I-bir} If $R$ is a ring and $E$ is a subset of $\Spec(R)$, then the total ring of fractions of $\prod\limits_{\mathfrak{p}\in E}R/\mathfrak{p}$ is canonically isomorphic to $\prod\limits_{\mathfrak{p}\in E}\kappa(\mathfrak{p})$.
\end{corollary}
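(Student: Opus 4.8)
The plan is to combine Lemma~\ref{Lemma T-S} with the standard identification of $\kappa(\mathfrak{p})$ as the total ring of fractions of $R/\mathfrak{p}$. Applying Lemma~\ref{Lemma T-S} to the family $(R/\mathfrak{p})_{\mathfrak{p}\in E}$ gives a canonical isomorphism
$$T\Bigl(\prod_{\mathfrak{p}\in E}R/\mathfrak{p}\Bigr)\simeq\prod_{\mathfrak{p}\in E}T(R/\mathfrak{p}).$$
So the corollary reduces to showing that each factor $T(R/\mathfrak{p})$ is canonically isomorphic to $\kappa(\mathfrak{p})$, and that these factorwise isomorphisms assemble into the product isomorphism.

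The key observation I would use is that $R/\mathfrak{p}$ is an integral domain, since $\mathfrak{p}$ is prime. For any integral domain $D$ the total ring of fractions $T(D)$ coincides with its field of fractions, because the set of nonzerodivisors is exactly $D\setminus\{0\}$. Hence $T(R/\mathfrak{p})$ is the fraction field of $R/\mathfrak{p}$. On the other hand, $\kappa(\mathfrak{p})=R_{\mathfrak{p}}/\mathfrak{p}R_{\mathfrak{p}}$ is also canonically the fraction field of $R/\mathfrak{p}$: localizing the domain $R/\mathfrak{p}$ at the multiplicative set $R\setminus\mathfrak{p}$ (equivalently, at $(R/\mathfrak{p})\setminus\{0\}$) produces its field of fractions, and this localization is precisely $R_{\mathfrak{p}}/\mathfrak{p}R_{\mathfrak{p}}$. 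Thus $T(R/\mathfrak{p})\simeq\kappa(\mathfrak{p})$ canonically for each $\mathfrak{p}\in E$.

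I would then simply substitute this factorwise identification into the product to obtain
$$T\Bigl(\prod_{\mathfrak{p}\in E}R/\mathfrak{p}\Bigr)\simeq\prod_{\mathfrak{p}\in E}T(R/\mathfrak{p})\simeq\prod_{\mathfrak{p}\in E}\kappa(\mathfrak{p}),$$
all maps being canonical. There is no genuine obstacle here: the only point requiring a word of care is that the isomorphisms $T(R/\mathfrak{p})\simeq\kappa(\mathfrak{p})$ are natural enough to pass through the product, which is immediate since a product of canonical isomorphisms is again canonical. This is exactly why the proof is essentially a one-line corollary of Lemma~\ref{Lemma T-S}, and I would present it as such.
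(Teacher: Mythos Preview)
Your proof is correct and follows exactly the paper's approach: the paper's proof consists of the single sentence ``It follows from Lemma~\ref{Lemma T-S},'' and you have simply spelled out the implicit identification $T(R/\mathfrak{p})\simeq\kappa(\mathfrak{p})$ that makes this work.
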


\begin{proof} It follows from Lemma \ref{Lemma T-S}. 
\end{proof}

\begin{lemma}\label{Theorem 12 mtd} For the ring $R=\prod\limits_{k\in S}R_{k}$ the following assertions are equivalent. \\
$\mathbf{(i)}$ $\mathfrak{N}(R)=\prod\limits_{k\in S}\mathfrak{N}(R_{k})$. \\
$\mathbf{(ii)}$ The set of minimal tame primes of $R$ is Zariski dense in $\Spec(R)$.
\end{lemma}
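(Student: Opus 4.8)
The plan is to collapse both statements onto a single ideal identity by computing the intersection of all minimal tame primes of $R$. The bridge is the standard density criterion: a subset $E\subseteq\Spec(R)$ is Zariski dense if and only if $\bigcap_{\mathfrak{p}\in E}\mathfrak{p}=\mathfrak{N}(R)$. Indeed, the Zariski closure of $E$ is $V\big(\bigcap_{\mathfrak{p}\in E}\mathfrak{p}\big)$, and $V(J)=\Spec(R)$ precisely when $J\subseteq\mathfrak{N}(R)$; since each prime contains the nilradical, the reverse inclusion is automatic, so density is equivalent to equality.

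Taking $E$ to be the set of minimal tame primes, I would invoke the description recalled in the preliminaries: these are exactly the primes $\pi_k^{-1}(\mathfrak{q})$ with $k\in S$ and $\mathfrak{q}$ a minimal prime of $R_k$. Because preimages commute with arbitrary intersections and the nilradical of any ring is the intersection of its minimal primes, for each fixed $k$ one obtains $\bigcap_{\mathfrak{q}\in\Min(R_k)}\pi_k^{-1}(\mathfrak{q})=\pi_k^{-1}(\mathfrak{N}(R_k))$. Intersecting over all $k$ and reading off the condition componentwise then gives
$$\bigcap_{P\text{ minimal tame}}P=\bigcap_{k\in S}\pi_k^{-1}(\mathfrak{N}(R_k))=\prod_{k\in S}\mathfrak{N}(R_k),$$
since an element $(x_k)$ lies in the middle term exactly when $x_k\in\mathfrak{N}(R_k)$ for every $k$.

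Combining the two steps, the minimal tame primes are Zariski dense if and only if $\prod_{k\in S}\mathfrak{N}(R_k)=\mathfrak{N}(R)$, which is precisely the equivalence of $\mathbf{(i)}$ and $\mathbf{(ii)}$. Every manipulation here is routine bookkeeping with preimages and intersections, so I do not expect a genuine obstacle. The one point deserving care is the density criterion itself --- one must confirm that $\overline{E}=\Spec(R)$ forces the intersection of the primes in $E$ to \emph{equal}, not merely contain, the nilradical --- but this is immediate from the closure formula together with the equivalence $V(J)=\Spec(R)\iff J\subseteq\mathfrak{N}(R)$.
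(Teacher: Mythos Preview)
Your argument is correct. The computation $\bigcap_{P\text{ minimal tame}}P=\prod_{k\in S}\mathfrak{N}(R_k)$ is valid (using that each $R_k$ is nonzero, so $\Min(R_k)\neq\emptyset$), and the density criterion via $\overline{E}=V\big(\bigcap_{\mathfrak{p}\in E}\mathfrak{p}\big)$ is standard and correctly applied.

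The paper's proof reaches the same conclusion but is organized differently: rather than invoking the closure formula $\overline{E}=V\big(\bigcap_{\mathfrak{p}\in E}\mathfrak{p}\big)$ and reducing everything to a single ideal identity, it argues each implication separately by chasing elements through basic opens $D(a)$. For $\mathbf{(i)}\Rightarrow\mathbf{(ii)}$ it shows any nonempty $D(a)$ contains a minimal tame prime by finding a coordinate $a_k\notin\mathfrak{N}(R_k)$; for $\mathbf{(ii)}\Rightarrow\mathbf{(i)}$ it assumes some $a\in\prod_k\mathfrak{N}(R_k)$ is not nilpotent and derives a contradiction from a minimal tame prime lying in $D(a)$. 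Your approach is more compact and makes the symmetry of the equivalence transparent, collapsing both directions into one equality; the paper's version is more hands-on but requires no appeal to the general closure formula. Either way the content is the same.
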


\begin{proof} (i)$\Rightarrow$(ii): It suffices to show that every nonempty Zariski open $U$ of $\Spec(R)$ meets the set of minimal tame primes of $R$. Since $U$ is nonempty, so there exists some $a=(a_{k})\in R$ such that $\emptyset\neq D(a)\subseteq U$. This shows that $a$ is not nilpotent. Thus by hypothesis, there exists some $k$ such that $a_{k}\notin\mathfrak{N}(R_{k})$. So $a_{k}\notin\mathfrak{p}$ for some minimal prime ideal $\mathfrak{p}$ of $R_{k}$. Then $\pi_{k}^{-1}(\mathfrak{p})$ is a minimal tame prime of $R$ with
$\pi_{k}^{-1}(\mathfrak{p})\in D(a)$. \\
(ii)$\Rightarrow$(i): It is clear that $\mathfrak{N}(R)\subseteq\prod\limits_{k\in S}\mathfrak{N}(R_{k})$. To prove the reverse inclusion, take $a=(a_{k})\in\prod\limits_{k\in S}\mathfrak{N}(R_{k})$.
If $a$ is not nilpotent, then $D(a)$ is nonempty. Then by hypothesis, there is a minimal tame prime $\pi_{k}^{-1}(\mathfrak{p})$ of $R$ such that $\pi_{k}^{-1}(\mathfrak{p})\in D(a)$ where $\mathfrak{p}$ is a (minimal) prime ideal of $R_{k}$ for some $k$. It follows that $a_{k}\notin\mathfrak{p}$ which is a contradiction, because $a_{k}$ is nilpotent.
\end{proof}

By $\gamma(E)$ we mean the patch closure of $E$ in $\Spec(R)$. In the following theorem, we give an alternative proof to our result \cite[Theorem 3.1]{Tarizadeh 2}. We will use this key result in the sequel. 

\begin{theorem}\label{Lemma 2ii} Let $R$ be a ring and $E$ a subset of $\Spec(R)$. Then $\gamma(E)=\Ima\pi^{\ast}$ where $\pi:R\rightarrow\prod\limits_{\mathfrak{p}\in E}\kappa(\mathfrak{p})$ is the canonical ring map.
\end{theorem}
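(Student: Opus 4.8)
The plan is to realize $\Ima\pi^{\ast}$ as the patch closure of $E$ by exhibiting $E$ as the image of a patch-dense subset of $\Spec(A)$, where $A:=\prod_{\mathfrak{p}\in E}\kappa(\mathfrak{p})$. First I would record the structural facts about $A$: being a direct product of fields, $A$ is von-Neumann regular, hence reduced and zero-dimensional, so that on $\Spec(A)$ the Zariski and patch topologies coincide (each $D(a)$ is clopen, as $(a)=(e)$ for an idempotent $e$). Moreover, by the characterization recalled in the preliminaries, $\Ima\pi^{\ast}$ is automatically a patch-closed subset of $\Spec(R)$, being the image of a spectral map. Thus, once I check $E\subseteq\Ima\pi^{\ast}$, the inclusion $\gamma(E)\subseteq\Ima\pi^{\ast}$ is immediate, and the whole content of the theorem is the reverse inclusion.

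The key step is to locate $E$ inside $\Spec(A)$ as the set of minimal tame primes. Writing $\pi_{\mathfrak{p}}:A\rightarrow\kappa(\mathfrak{p})$ for the projections, each field $\kappa(\mathfrak{p})$ has $(0)$ as its unique prime, so by the description of minimal tame primes in the preliminaries the minimal tame primes of $A$ are exactly the $Q_{\mathfrak{p}}:=\pi_{\mathfrak{p}}^{-1}\big((0)\big)$ for $\mathfrak{p}\in E$. I would then verify that $\pi^{-1}(Q_{\mathfrak{p}})=\mathfrak{p}$: the composite $R\rightarrow A\xrightarrow{\pi_{\mathfrak{p}}}\kappa(\mathfrak{p})$ is the canonical map to the residue field, whose kernel is $\mathfrak{p}$. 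Hence $\pi^{\ast}$ carries the set $T:=\{Q_{\mathfrak{p}}:\mathfrak{p}\in E\}$ precisely \emph{onto} $E$. Now, since $A$ is reduced we have $\mathfrak{N}(A)=(0)=\prod_{\mathfrak{p}\in E}\mathfrak{N}\big(\kappa(\mathfrak{p})\big)$, so Lemma \ref{Theorem 12 mtd} applies and shows that $T$ is Zariski dense in $\Spec(A)$; because the Zariski and patch topologies agree on the zero-dimensional ring $A$, the set $T$ is in fact patch dense.

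Finally I would push this density through $\pi^{\ast}$. The induced map $\pi^{\ast}:\Spec(A)\rightarrow\Spec(R)$ is continuous for the patch topologies, since $(\pi^{\ast})^{-1}\big(D(a)\cap V(b)\big)=D(\pi(a))\cap V(\pi(b))$; as the patch topology is compact and Hausdorff, $\pi^{\ast}$ is a closed map and therefore preserves closures, i.e.\ $\pi^{\ast}$ of a closure equals the closure of its image. Using that the patch closure of $T$ in $\Spec(A)$ is all of $\Spec(A)$, this gives that $\Ima\pi^{\ast}=\pi^{\ast}(\Spec(A))$ is the patch closure of $\pi^{\ast}(T)=E$ in $\Spec(R)$, that is, $\Ima\pi^{\ast}=\gamma(E)$, as desired. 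The main obstacle, and the place where the hypotheses are genuinely used, is the upgrade from Zariski to patch density in the middle step: it rests on $A$ being von-Neumann regular and on the clean identification of the minimal tame primes of a product of fields together with the computation $\pi^{-1}(Q_{\mathfrak{p}})=\mathfrak{p}$. One should also take momentary care that $\pi^{\ast}(T)$ is \emph{literally} $E$ and not merely a dense subset of it, which is exactly what makes the closure computation collapse to an equality.
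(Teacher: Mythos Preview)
Your proof is correct and follows essentially the same route as the paper: both use that $A$ is von-Neumann regular so that Zariski equals patch on $\Spec(A)$, apply Lemma~\ref{Theorem 12 mtd} to see the (minimal) tame primes are dense, identify $\pi^{\ast}$ of that set with $E$, and push density through the patch-continuous map $\pi^{\ast}$. The only cosmetic difference is that you invoke compactness and Hausdorffness of the patch topology to make $\pi^{\ast}$ a closed map and obtain $\pi^{\ast}(\overline{T})=\overline{\pi^{\ast}(T)}$ directly, whereas the paper uses only the continuity inclusion $\pi^{\ast}(\gamma(E'))\subseteq\gamma(\pi^{\ast}(E'))$, which already suffices since the reverse inclusion $\gamma(E)\subseteq\Ima\pi^{\ast}$ was handled separately.
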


\begin{proof} Clearly $E\subseteq\Ima\pi^{\ast}$. This yields that $\gamma(E)\subseteq\Ima(\pi^{\ast})$, because it is well known that the patch closed subsets of $\Spec(R)$ are precisely of the form $\Ima(\phi^{\ast})$ where $\phi:R\rightarrow A$ is a ring map. To see the reverse inclusion, we act as follows. We know that $T:=\prod\limits_{\mathfrak{p}\in E}\kappa(\mathfrak{p})$ is a von-Neumann regular ring, and so it is a reduced zero-dimensional ring. The Zariski and patch topologies over the prime spectrum of a zero-dimensional ring are the same. Thus by Lemma \ref{Theorem 12 mtd}, $\gamma(E')=\Spec(T)$ where $E':=\{\pi^{-1}_{\mathfrak{p}}(0):\mathfrak{p}\in E\}$ denotes the set of tame primes of $T$ and $\pi_{\mathfrak{p}}:T\rightarrow\kappa(\mathfrak{p})$ is the projection map. It is obvious that the map $\pi^{\ast}:\Spec(T)\rightarrow\Spec(R)$ is continuous with respect to the patch topology, and $\pi^{\ast}(E')=E$. It is also easy to see that if $f: X\rightarrow Y$ is a continuous map of topological spaces and $E\subseteq X$, then $f(\overline{E})\subseteq\overline{f(E)}$. Using these observations, then we have $\Ima\pi^{\ast}=\pi^{\ast}\big(\Spec(T)\big)=
\pi^{\ast}\big(\gamma(E')\big)\subseteq
\gamma\big(\pi^{\ast}(E')\big)=\gamma(E)$.
\end{proof}

The following theorem is one of the main results of this article. If $E$ is a subset of $\Spec(R)$ then $\overline{E}$ denotes the Zariski closure of $E$ in $\Spec(R)$.

\begin{theorem}\label{Theorem 8 Z closure} Let $R$ be a ring and $E$ a subset of $\Spec(R)$. Then $\Ima\pi^{\ast}\subseteq\overline{E}$ where $\pi:R\rightarrow\prod\limits_{\mathfrak{p}\in E}R/\mathfrak{p}$ is the canonical ring map.
\end{theorem}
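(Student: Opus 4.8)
The plan is to factor $\pi$ through the total ring of fractions and then bypass the (unknown) wild primes by a minimal-prime argument, exactly in the spirit of the laying-over technique.

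Write $B:=\prod\limits_{\mathfrak p\in E}R/\mathfrak p$ and $T:=\prod\limits_{\mathfrak p\in E}\kappa(\mathfrak p)$. Since each $R/\mathfrak p$ is a domain whose fraction field is $\kappa(\mathfrak p)$, Lemma \ref{Lemma T-S} and Corollary \ref{Coro I-bir} identify $T$ with the total ring of fractions of $B$, and the induced map $j\colon B\to T$ with the localization of $B$ at its multiplicative set $\Sigma$ of non-zero-divisors. Consequently $\rho:=j\circ\pi\colon R\to T$ is precisely the canonical map appearing in Theorem \ref{Lemma 2ii}, so $\rho^{\ast}=\pi^{\ast}\circ j^{\ast}$ and, by that theorem, $\gamma(E)=\Ima\rho^{\ast}=\pi^{\ast}\big(\Ima j^{\ast}\big)$. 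I will also use two facts recorded in the Preliminaries: the patch closure is contained in the Zariski closure, so $\gamma(E)\subseteq\overline E$, and $\overline E$, being Zariski closed, is stable under specialization.

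Now take an arbitrary point $\pi^{-1}(P)$ of $\Ima\pi^{\ast}$, with $P\in\Spec(B)$; here $P$ may well be one of the intractable wild primes of the infinite product $B$. Rather than contracting $P$ itself, I choose a minimal prime $P_{0}$ of $B$ with $P_{0}\subseteq P$. The key observation is the standard fact that every element of a minimal prime is a zero-divisor, so $P_{0}\cap\Sigma=\emptyset$; equivalently $P_{0}$ survives to the total ring of fractions, i.e. $P_{0}\in\Ima j^{\ast}$. Applying $\pi^{\ast}$ then yields $\pi^{-1}(P_{0})=\pi^{\ast}(P_{0})\in\pi^{\ast}\big(\Ima j^{\ast}\big)=\gamma(E)\subseteq\overline E$.

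Finally, from $P_{0}\subseteq P$ we obtain $\pi^{-1}(P_{0})\subseteq\pi^{-1}(P)$, so $\pi^{-1}(P)$ is a specialization of the point $\pi^{-1}(P_{0})\in\overline E$. Since $\overline E$ is stable under specialization, $\pi^{-1}(P)\in\overline E$, and as $P$ was arbitrary we conclude $\Ima\pi^{\ast}\subseteq\overline E$. The main obstacle is of course the wild primes of $B$, whose structure is inaccessible; the device that neutralizes it is that we never need to analyze them directly. Descending to a minimal prime beneath $P$ lands us in the patch closure $\gamma(E)$, where Theorem \ref{Lemma 2ii} is available, and the specialization-stability of the Zariski closure then transports the conclusion back up to $P$.
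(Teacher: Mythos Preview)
Your proof is correct and follows essentially the same route as the paper's: pass to a minimal prime $P_{0}\subseteq P$, show it lies in $\Ima j^{\ast}$ so that $\pi^{-1}(P_{0})\in\gamma(E)\subseteq\overline{E}$ via Theorem~\ref{Lemma 2ii}, then use specialization-stability of $\overline{E}$. The only minor variation is that where the paper invokes the general laying-over result for injective maps (Remark~\ref{Remark iv lying over}), you instead use the localization-specific criterion $P_{0}\cap\Sigma=\emptyset$ together with the classical fact that minimal primes consist of zero-divisors; this is equally valid and arguably a touch more elementary.
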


\begin{proof} If $P$ is a prime ideal of $S:=\prod\limits_{\mathfrak{p}\in E}R/\mathfrak{p}$ then there exists a minimal prime ideal $P'$ of $S$ with $P'\subseteq P$. We know that $\kappa(\mathfrak{p})$ is the field of fractions of the integral domain $R/\mathfrak{p}$, and so the ring map $\phi:S\rightarrow\prod\limits_{\mathfrak{p}\in E}\kappa(\mathfrak{p})$ induced by the canonical injective ring maps $R/\mathfrak{p}\rightarrow\kappa(\mathfrak{p})$ is injective. In fact, by Lemma \ref{Lemma T-S} (or by Corollary \ref{Coro I-bir}), $T$ is the total ring of fractions of $S$ and $\phi:S\rightarrow T$ is the canonical ring map. 
Then by Remark \ref{Remark iv lying over}, there exists a prime ideal $P''$ of $T:=\prod\limits_{\mathfrak{p}\in E}\kappa(\mathfrak{p})$ which laying over $P'$, i.e., $P'=\phi^{-1}(P'')$. But $\phi\circ\pi:R\rightarrow T$ is the canonical ring map. Using this and Theorem \ref{Lemma 2ii}, we get that
$\pi^{-1}(P')\in\gamma(E)$. But $\gamma(E)\subseteq\overline{E}$, because the patch topology is finer than the Zariski topology. It follows that $\pi^{-1}(P)\in\overline{E}$, since every Zariski closed subset of the prime spectrum is stable under specialization.
Hence, $\Ima\pi^{\ast}\subseteq\overline{E}$.
\end{proof}

In Remark \ref{Remark v5}, we will observe that the inclusion $\Ima\pi^{\ast}\subseteq\overline{E}$ can be strict. \\

In what follows, $c\ell(E)$ denotes the closure of $E$ in $\Spec(R)$ with respect to the flat topology. 
In the following result, $U(R)=\{r\in R:\exists r'\in R, rr'=1\}$ denotes the group of units of $R$.

\begin{lemma}\label{Prop 1i1} For a given ring $R$ we have: \\
$\mathbf{(i)}$ Every infinite subset of $\Spec(R)$ is Zariski dense if and only if $V(a)$ is a finite set for all $a\in R\setminus\mathfrak{N}(R)$. \\
$\mathbf{(ii)}$ Every infinite subset of $\Spec(R)$ is flat dense if and only if $D(a)$ is a finite set for all $a\in R\setminus U(R)$.
\end{lemma}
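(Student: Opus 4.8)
The plan is to reduce density, in each topology, to the statement that $E$ meets every nonempty basic open set (every nonempty open contains a nonempty basic open), and then to settle each equivalence by contraposition. The dictionary I will use is: for the Zariski topology the basic opens are the $D(a)$, with $D(a) \neq \emptyset$ precisely when $a \notin \mathfrak{N}(R)$, and each $V(a)$ is Zariski closed; for the flat topology the $V(a)$ form a subbasis, so the basic opens are the $V(I)$ with $I$ finitely generated, with $V(a) \neq \emptyset$ precisely when $a \notin U(R)$, and each $D(a) = \Spec(R) \setminus V(a)$ is flat closed.

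For part (i), to prove the reverse implication I would take an infinite $E$ that is not Zariski dense; then $E$ misses some nonempty basic open $D(a)$, so $E \subseteq V(a)$ with $a \notin \mathfrak{N}(R)$, and the hypothesis that such $V(a)$ is finite contradicts the infinitude of $E$. For the forward implication I argue contrapositively: if some $V(a)$ with $a \notin \mathfrak{N}(R)$ were infinite, then $E := V(a)$ is itself an infinite, Zariski-closed, proper subset (proper because $D(a) \neq \emptyset$), hence not dense, contradicting the assumption that every infinite subset is dense.

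Part (ii) follows the same template dually, and here lies the one real obstacle. Because the $V(a)$ form only a subbasis of the flat topology, its basic opens are the finite intersections $V(I) = \bigcap_{i=1}^{n} V(a_i) = \Spec(R) \setminus \bigcup_{i=1}^{n} D(a_i)$ for $I = (a_1, \ldots, a_n)$, so an infinite $E$ failing to be flat dense sits inside some such $\bigcup_{i=1}^{n} D(a_i)$ with $V(I) \neq \emptyset$. The observation that makes the single-element hypothesis suffice is that $V(I) \neq \emptyset$ forces each generator $a_i$ to lie in a prime ideal, hence to be a non-unit; then each $D(a_i)$ is finite by hypothesis, their finite union is finite, and $E$ cannot be infinite. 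The forward implication mirrors part (i): if $D(a)$ is infinite for some non-unit $a$, then $E := D(a)$ is an infinite, flat-closed, proper subset (proper since $V(a) \neq \emptyset$), contradicting flat density. Once the subbasis-versus-basis point is handled via the non-unit observation, the flat case closes exactly as the Zariski one.
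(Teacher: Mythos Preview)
Your proposal is correct and follows essentially the same approach as the paper: both argue density via basic opens, both prove the forward direction by taking $E=V(a)$ (resp.\ $E=D(a)$) as the witnessing infinite set, and both handle the subbasis issue in (ii) by observing that $V(I)\neq\emptyset$ forces every generator of $I$ to be a non-unit, so $E\subseteq\bigcup_{i}D(a_{i})$ lands in a finite set. The only difference is cosmetic---you phrase the backward implications by contraposition while the paper argues them directly---but the content is identical.
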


\begin{proof} (i): Assume every infinite subset of $\Spec(R)$ is Zariski dense. Suppose $E:=V(a)$ is infinite for some $a\in R\setminus\mathfrak{N}(R)$. Then $V(a)=\overline{E}=\Spec(R)$. This shows that $a$ is nilpotent which is a contradiction. Conversely, let $E$ be an infinite subset of $\Spec(R)$. It suffices to show that each nonempty Zariski open $U$ of $\Spec(R)$ meets $E$. But there exists some $a\in R$ with $D(a)\subseteq U$ such that $D(a)$ is nonempty. This shows that $a$ is not nilpotent. If $D(a)\cap E=\emptyset$ then $E\subseteq V(a)$ which is impossible, since $E$ is infinite. \\
(ii): Assume every infinite subset of $\Spec(R)$ is flat dense. Suppose $E:=D(a)$ is infinite for some $a\in R\setminus U(R)$. Then $D(a)=c\ell(E)=\Spec(R)$. This shows that $a$ is invertible in $R$ which is a contradiction. Conversely, let $E$ be an infinite subset of $\Spec(R)$. It will be enough to show that each nonempty flat open $U$ of $\Spec(R)$ meets $E$. Since $U$ is nonempty, so there exists a finitely generated ideal $I=(a_{1},\ldots,a_{n})$ of $R$ such that $\emptyset\neq V(I)\subseteq U$. It follows that $a_{i}\in R\setminus U(R)$ for all $i$. If $V(I)\cap E=\emptyset$ then $E\subseteq V(I)^{c}=\big(\bigcap\limits_{i=1}^{n}V(a_{i})\big)^{c}=
\bigcup\limits_{i=1}^{n}D(a_{i})$ which is impossible, because $E$ is infinite.
\end{proof}

\begin{corollary}\label{Lemma i1} If $R$ is a PID or more generally a Dedekind domain, then every infinite subset of $\Spec(R)$ is Zariski dense.
\end{corollary}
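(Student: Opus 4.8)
The plan is to deduce this from the characterization in Lemma~\ref{Prop 1i1}(i). Since a Dedekind domain is in particular an integral domain, we have $\mathfrak{N}(R)=0$, and so $R\setminus\mathfrak{N}(R)=R\setminus\{0\}$. Thus it suffices to show that $V(a)$ is a finite set for every nonzero $a\in R$, and then Lemma~\ref{Prop 1i1}(i) immediately gives that every infinite subset of $\Spec(R)$ is Zariski dense.

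So fix a nonzero $a\in R$. The quotient map $R\to R/(a)$ induces a homeomorphism $\Spec(R/(a))\to V(a)$, and hence it is enough to check that $R/(a)$ has only finitely many prime ideals. First I would observe that $R/(a)$ is Noetherian, since $R$ is. Next I would argue that $R/(a)$ is zero-dimensional: any prime ideal of $R$ containing the nonzero element $a$ is a nonzero prime, hence maximal (a Dedekind domain has Krull dimension at most one), so every prime of $R/(a)$ is maximal. A Noetherian ring of Krull dimension zero is Artinian, and an Artinian ring has only finitely many maximal, equivalently prime, ideals. Therefore $\Spec(R/(a))$, and hence $V(a)$, is finite.

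Alternatively, one may appeal directly to the unique factorization of ideals in a Dedekind domain: writing $(a)=\mathfrak{p}_{1}^{e_{1}}\cdots\mathfrak{p}_{r}^{e_{r}}$ with the $\mathfrak{p}_{i}$ distinct nonzero primes, the primes of $R$ containing $a$ are exactly $\mathfrak{p}_{1},\ldots,\mathfrak{p}_{r}$, so that $V(a)=\{\mathfrak{p}_{1},\ldots,\mathfrak{p}_{r}\}$ is finite. Either route supplies the finiteness hypothesis of Lemma~\ref{Prop 1i1}(i), from which the corollary follows at once. I do not expect any genuine obstacle here; the only point requiring care is the justification that a nonzero element of a Dedekind domain lies in only finitely many prime ideals, which is precisely the classical finiteness built into one-dimensional Noetherian domains.
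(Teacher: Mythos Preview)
Your proof is correct and follows essentially the same approach as the paper: both reduce via Lemma~\ref{Prop 1i1}(i) to showing that $V(a)$ is finite for every nonzero $a$, and your alternative argument through the prime factorization $(a)=\mathfrak{p}_{1}^{e_{1}}\cdots\mathfrak{p}_{r}^{e_{r}}$ is exactly the paper's route. Your first argument, passing through the Artinian quotient $R/(a)$, is an equally standard and slightly more general way to obtain the same finiteness (it only uses that $R$ is a one-dimensional Noetherian domain).
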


\begin{proof} By Lemma \ref{Prop 1i1}(i), it suffices to show that $V(a)$ is a finite set for all nonzero $a\in R$. We may assume $a$ is a nonunit of $R$ (because if $a$ is invertible in $R$ then $V(a)=\emptyset$). There exists a finite set $\{\mathfrak{p}_{1},\ldots,\mathfrak{p}_{n}\}$ (with cardinality $\leqslant n$) of prime ideals of $R$ such that $Ra=\prod\limits_{i=1}^{n}\mathfrak{p}_{i}$, because in a Dedekind domain every proper ideal can be written as a finite product of prime ideals. Using this and the fact that in a Dedekind domain every nonzero prime ideal is maximal, we get that $V(a)=\{\mathfrak{p}_{1},\ldots,\mathfrak{p}_{n}\}$.
\end{proof}

\begin{remark}\label{Remark v5} Let $R$ be a ring and $E$ a subset of $\Spec(R)$. Then it is clear that $E\subseteq\Ima\pi^{\ast}$ where $\pi:R\rightarrow\prod\limits_{\mathfrak{p}\in E}R/\mathfrak{p}$ is the canonical ring map. Using Theorem \ref{Theorem 8 Z closure}, then $\overline{E}=\Ima\pi^{\ast}$ if and only if $\Ima\pi^{\ast}$ is stable under specialization (e.g. the map $\pi$ has the going-up property). If $E$ is a finite set then $\overline{E}=\Ima\pi^{\ast}=\bigcup\limits_{\mathfrak{p}\in E}V(\mathfrak{p})$. But here we show with an example that the inclusion $\Ima\pi^{\ast}\subseteq\overline{E}$ in Theorem \ref{Theorem 8 Z closure} can be strict for some infinite subsets. To achieve this, let $S$ be a proper infinite set of prime numbers and take $E=\{p\mathbb{Z}: p\in S\}$ which is Zariski dense in $\Spec(\mathbb{Z})$ by Corollary \ref{Lemma i1}. The image of each prime number $q$ with $q\notin S$ under the canonical ring map $\pi:\mathbb{Z}\rightarrow\prod\limits_{p\in S}\mathbb{Z}/p\mathbb{Z}$ is invertible. This shows that $q\mathbb{Z}\notin\Ima\pi^{\ast}$. Hence, the set $\Ima\pi^{\ast}$ is strictly contained in $\overline{E}=\Spec(\mathbb{Z})$. 
\end{remark}

The following result can be viewed as the dual of Theorem \ref{Theorem 8 Z closure}.

\begin{theorem}\label{Theorem 9 F closure} Let $R$ be a ring and $E$ a subset of $\Spec(R)$. Then $\Ima\pi^{\ast}\subseteq c\ell(E)$ where $\pi:R\rightarrow\prod\limits_{\mathfrak{p}\in E}R_{\mathfrak{p}}$ is the canonical ring map.
\end{theorem}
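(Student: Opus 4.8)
The plan is to mirror the proof of Theorem \ref{Theorem 8 Z closure}, replacing the injection into the total ring of fractions (and the laying-over-a-minimal-prime step) by a canonical \emph{surjection} onto $\prod_{\mathfrak{p}\in E}\kappa(\mathfrak{p})$ together with a lift along a \emph{maximal} prime. I would set $S=\prod_{\mathfrak{p}\in E}R_{\mathfrak{p}}$ and $T=\prod_{\mathfrak{p}\in E}\kappa(\mathfrak{p})$. Since each $R_{\mathfrak{p}}$ is local with residue field $\kappa(\mathfrak{p})=R_{\mathfrak{p}}/\mathfrak{p}R_{\mathfrak{p}}$, taking the product of the canonical surjections $R_{\mathfrak{p}}\to\kappa(\mathfrak{p})$ gives a surjective ring map $\psi:S\to T$ with $\Ker\psi=\prod_{\mathfrak{p}\in E}\mathfrak{p}R_{\mathfrak{p}}$, and I would note that $\psi\circ\pi:R\to T$ is exactly the canonical map occurring in Theorem \ref{Lemma 2ii}.

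The first key step is to observe that $\Ker\psi$ is contained in (indeed equal to) the Jacobson radical $\mathfrak{J}(S)$: for $a=(a_{\mathfrak{p}})$ with each $a_{\mathfrak{p}}\in\mathfrak{p}R_{\mathfrak{p}}$ and any $b=(b_{\mathfrak{p}})\in S$, each $1-a_{\mathfrak{p}}b_{\mathfrak{p}}$ is a unit of the local ring $R_{\mathfrak{p}}$, so $1-ab$ is a unit of $S$, whence $a\in\mathfrak{J}(S)$. Now I would fix an arbitrary $P\in\Spec(S)$ and choose a maximal ideal $M$ of $S$ with $P\subseteq M$. Because $\Ker\psi\subseteq\mathfrak{J}(S)\subseteq M$, the prime $M$ lies in $\Ima\psi^{\ast}$, so $M=\psi^{-1}(Q)$ for some $Q\in\Spec(T)$. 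Hence $\pi^{-1}(M)=(\psi\circ\pi)^{-1}(Q)\in\Ima(\psi\circ\pi)^{\ast}=\gamma(E)$ by Theorem \ref{Lemma 2ii}.

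To finish, I would use that the patch topology is finer than the flat topology, so $\gamma(E)\subseteq c\ell(E)$ and thus $\pi^{-1}(M)\in c\ell(E)$. Since $c\ell(E)$ is flat closed it is stable under generalization, and $P\subseteq M$ forces $\pi^{-1}(P)\subseteq\pi^{-1}(M)$; therefore $\pi^{-1}(P)\in c\ell(E)$. As $P$ was arbitrary, this yields $\Ima\pi^{\ast}\subseteq c\ell(E)$.

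The main obstacle, and the real point of the argument, is the passage from an arbitrary prime $P$ to a prime lying in $\Ima\psi^{\ast}$. This is precisely where the proof dualizes Theorem \ref{Theorem 8 Z closure}: there one descends to a minimal prime and invokes laying over a minimal prime along an injection, whereas here one ascends to a maximal ideal and exploits that $\Ker\psi\subseteq\mathfrak{J}(S)$ sits inside every maximal ideal, making the lift along the surjection $\psi$ automatic. Pairing this with stability under \emph{generalization} (rather than specialization) is the only load-bearing idea; the remaining inclusions are formal.
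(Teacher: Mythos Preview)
Your proof is correct and follows essentially the same route as the paper: pass to a maximal ideal $M\supseteq P$, use that the kernel of the surjection $S\to T=\prod_{\mathfrak{p}\in E}\kappa(\mathfrak{p})$ equals $\prod_{\mathfrak{p}}\mathfrak{p}R_{\mathfrak{p}}=\mathfrak{J}(S)\subseteq M$ to lift $M$ to $T$, apply Theorem~\ref{Lemma 2ii} to land in $\gamma(E)\subseteq c\ell(E)$, and finish by stability under generalization. The only cosmetic difference is that you spell out the containment $\Ker\psi\subseteq\mathfrak{J}(S)$ via the $1-ab$ unit criterion, whereas the paper simply asserts the equality $\prod_{\mathfrak{p}}\mathfrak{J}(R_{\mathfrak{p}})=\mathfrak{J}(S)$.
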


\begin{proof} If $P$ is a prime ideal of $S:=\prod\limits_{\mathfrak{p}\in E}R_{\mathfrak{p}}$ then $P\subseteq M$ for some maximal ideal $M$ of $S$. The ring map $\phi: S\rightarrow T:=\prod\limits_{\mathfrak{p}\in E}\kappa(\mathfrak{p})$ induced by the canonical ring maps $\phi_{\mathfrak{p}}:R_{\mathfrak{p}}
\rightarrow\kappa(\mathfrak{p})$ is surjective and $\Ker(\phi)=\prod\limits_{\mathfrak{p}\in E}\Ker(\phi_{\mathfrak{p}})=\prod\limits_{\mathfrak{p}\in E}\mathfrak{p}R_{\mathfrak{p}}=\prod\limits_{\mathfrak{p}\in E}\mathfrak{J}(R_{\mathfrak{p}})=\mathfrak{J}(S)\subseteq M$. So there exists a maximal ideal $M'$ of $T$ which laying over $M$, i.e., $\phi^{-1}(M')=M$. Clearly $\phi\circ\pi:R\rightarrow T$ is the canonical ring map. Using this and Theorem \ref{Lemma 2ii}, we obtain that
$\pi^{-1}(M)\in\gamma(E)$. But $\gamma(E)\subseteq c\ell(E)$, because the patch topology is finer than the flat topology. It follows that $\pi^{-1}(P)\in c\ell(E)$, since every flat closed subset of the prime spectrum is stable under generalization. Hence, $\Ima\pi^{\ast}\subseteq c\ell(E)$. 
\end{proof}

In Remark \ref{Remark 6 fc}, we will observe that the inclusion in Theorem \ref{Theorem 9 F closure} can be strict.
Recall that by a C.P. (or, compactly packed) ring we mean a ring $R$ which satisfies infinite prime avoidance: If an ideal $I$ of $R$ is contained in the union of a family $(\mathfrak{p}_{i})$ of prime ideals of $R$, then $I\subseteq\mathfrak{p}_{k}$ for some $k$. 
Dually, by a P.Z. (or, properly zipped) ring we mean a ring $R$ which satisfies infinite prime absorbance: If a prime ideal $\mathfrak{p}$ of $R$ contains the intersection of a family $(\mathfrak{p}_{i})$ of prime ideals of $R$, then $\mathfrak{p}_{k}\subseteq\mathfrak{p}$ for some $k$. It is well known that a Dedekind domain is C.P. if and only if its ideal class group has torsion. For more information on C.P. rings, P.Z. rings (and the ideal avoidance property) we refer the interested reader to the articles \cite{Tarizadeh-Chen 2}, \cite{Reis-Visw} and \cite{Tarizadeh-Chen}.

Further analysis of the example given in Remark \ref{Remark v5} leads us to the following result.

\begin{theorem}\label{Theorem 11 ii} Let $R$ be a Dedekind domain with torsion ideal class group and $E$ an infinite set of maximal ideals of $R$. Then the following assertions hold. \\
$\mathbf{(i)}$ $\Ima\pi^{\ast}=E\cup\{0\}$ and for every wild prime $P$ of $S:=\prod\limits_{\mathfrak{p}\in E}R/\mathfrak{p}$ we have $\pi^{-1}(P)=0$ where $\pi:R\rightarrow S$ is the canonical ring map. \\
$\mathbf{(ii)}$ $\Ima\pi^{\ast}=c\ell(E)=E\cup\{0\}$ and for every wild prime $P$ of $S:=\prod\limits_{\mathfrak{p}\in E}R_{\mathfrak{p}}$ we have $\pi^{-1}(P)=0$ where $\pi:R\rightarrow S$ is the canonical ring map.
\end{theorem}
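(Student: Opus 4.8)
The plan is to compute the contraction $\pi^{-1}(P)$ of every prime $P$ of $S$ to $R$ directly, splitting $\Spec(S)$ into its tame and wild primes, rather than routing through the laying-over step used in Theorems \ref{Theorem 8 Z closure} and \ref{Theorem 9 F closure}; the inclusion furnished by Theorem \ref{Theorem 9 F closure} will be needed only at the very end of part (ii). The single fact about Dedekind domains I would lean on throughout is the finiteness, already exploited in the proof of Corollary \ref{Lemma i1}, of the set $V(r)$, and hence of $F := \{\mathfrak{p} \in E : r \in \mathfrak{p}\}$, for every nonzero $r \in R$.

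For part (i), since each $\mathfrak{p} \in E$ is maximal we have $R/\mathfrak{p} = \kappa(\mathfrak{p})$, so $S$ is literally the ring $T = \prod_{\mathfrak{p} \in E}\kappa(\mathfrak{p})$ of Theorem \ref{Lemma 2ii}. The tame prime $\pi_{\mathfrak{q}}^{-1}(0)$ contracts along the quotient map $R \to R/\mathfrak{q}$ to $\mathfrak{q}$, giving $E \subseteq \Ima\pi^{\ast}$; and because $E$ is infinite, the direct sum ideal $I = \bigoplus_{\mathfrak{p} \in E} R/\mathfrak{p}$ is proper, whence $V(I) \neq \emptyset$ and wild primes exist. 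The crux is to show $\pi^{-1}(P) = 0$ for every wild prime $P$. Given $r \in R \setminus \{0\}$, the element $v := \pi(r)(1 - e_F) + e_F$, where $e_F = \sum_{\mathfrak{p} \in F} e_{\mathfrak{p}}$, is a unit of $S$, since each of its coordinates is a nonzero element of a field (namely $1$ on $F$ and $\bar{r}$ off $F$); as $F$ is finite we have $e_F \in I \subseteq P$, so from $\pi(r) = v + (\pi(r) - 1)e_F$ the membership $\pi(r) \in P$ would drag the unit $v$ into $P$, a contradiction. Hence every wild prime contracts to $0$, and combining the two computations yields $\Ima\pi^{\ast} = E \cup \{0\}$.

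For part (ii) the identical idempotent manoeuvre applies, the only change being that each factor $R_{\mathfrak{p}}$ is now a DVR: for nonzero $r$ the coordinate $r/1 \in R_{\mathfrak{p}}$ is a unit exactly for $\mathfrak{p} \notin V(r)$, and replacing the finitely many remaining coordinates by $1$ via $e_F \in I$ again produces a unit $v$ with $\pi(r) = v + (\pi(r)-1)e_F$, so every wild prime of $\prod_{\mathfrak{p} \in E} R_{\mathfrak{p}}$ contracts to $0$. Here the tame primes of $R_{\mathfrak{q}}$ are $0$ and $\mathfrak{q}R_{\mathfrak{q}}$, contracting along the injection $R \to R_{\mathfrak{q}}$ to $0$ and $\mathfrak{q}$ respectively, so once more $\Ima\pi^{\ast} = E \cup \{0\}$. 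To upgrade this to the flat closure I would verify that $E \cup \{0\}$ is flat closed: it is patch closed, being by part (i) of the form $\Ima\phi^{\ast}$ for a ring map $\phi$, and it is stable under generalization because in a Dedekind domain a maximal ideal generalizes only to itself and to $0$. A flat-closed set containing $E$ must contain $c\ell(E)$, so $c\ell(E) \subseteq E \cup \{0\}$; together with the inclusion $\Ima\pi^{\ast} \subseteq c\ell(E)$ of Theorem \ref{Theorem 9 F closure} this forces $c\ell(E) = \Ima\pi^{\ast} = E \cup \{0\}$.

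I expect the wild-prime computation $\pi^{-1}(P) = 0$ to be the only real obstacle; the tame-prime bookkeeping and the topological identification of $c\ell(E)$ are routine once it is in hand. The whole argument turns on the finiteness of $F$, which is exactly where the Dedekind hypothesis is indispensable: in a domain in which some nonzero $r$ lay in infinitely many members of $E$, the idempotent $e_F$ would no longer belong to the direct sum ideal and the unit-modification step would collapse. The torsion-class-group hypothesis aligns the statement with the C.P.\ framework of the surrounding discussion; note that by Corollary \ref{Lemma i1} the set $E$ is already Zariski dense, so the equality $\Ima\pi^{\ast} = E \cup \{0\}$ makes vivid how strictly smaller the image can be than $\overline{E} = \Spec(R)$.
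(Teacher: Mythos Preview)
Your proof is correct and follows a genuinely different route from the paper's. The paper leans on the C.P.\ property (infinite prime avoidance), which for Dedekind domains is equivalent to the torsion class group hypothesis via \cite[Theorem 2.2]{Reis-Visw}: to show $\Ima\pi^{\ast}\subseteq E\cup\{0\}$ it picks, for $\mathfrak{p}\notin E\cup\{0\}$, an element $a\in\mathfrak{p}\setminus\bigcup_{\mathfrak{q}\in E}\mathfrak{q}$ so that $\pi(a)$ is a unit; and for the wild-prime claim it similarly chooses $a\in\mathfrak{q}\setminus\bigcup_{\mathfrak{p}\neq\mathfrak{q}}\mathfrak{p}$ and builds an explicit inverse so that $1-e_{\mathfrak{q}}=x\pi(a)\in P$. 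Your argument replaces this prime-avoidance step with the finiteness of $F=\{\mathfrak{p}\in E:r\in\mathfrak{p}\}$ (valid in \emph{any} Dedekind domain, as in Corollary~\ref{Lemma i1}) and corrects the finitely many bad coordinates with the idempotent $e_{F}$; likewise, for $c\ell(E)\subseteq E\cup\{0\}$ you bypass prime avoidance entirely by observing that $E\cup\{0\}=\Ima\phi^{\ast}$ is patch closed and stable under generalization, hence flat closed. The upshot is that your proof never invokes the C.P.\ property, so it actually establishes the theorem for every Dedekind domain, not only those with torsion class group; the paper's approach, by contrast, makes the dependence on the C.P.\ framework explicit and thereby dovetails with the dual P.Z.\ argument of Theorem~\ref{Theorem 13-TH}.
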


\begin{proof} (i): First we show that $\Ima\pi^{\ast}\subseteq E\cup\{0\}$.
Take $\mathfrak{p}\in\Ima\pi^{\ast}$. Suppose $\mathfrak{p}\notin E\cup\{0\}$. By \cite[Theorem 2.2]{Reis-Visw}, $R$ has the infinite prime avoidance property. So we may choose some  $a\in\mathfrak{p}$ such that $a\notin\bigcup\limits_{\mathfrak{q}\in E}\mathfrak{q}$. There exists a prime ideal $\mathfrak{p}'$ of $S$ such that $\mathfrak{p}=\pi^{-1}(\mathfrak{p}')$. It follows that $\pi(a)=(a+\mathfrak{q})_{\mathfrak{q}\in E}\in\mathfrak{p}'$. Note that each $a+\mathfrak{q}$ is a nonzero element of the field $R/\mathfrak{q}$. This shows that $\pi(a)$ is invertible in $S$ which is a contradiction. Therefore $\Ima(\pi^{\ast})\subseteq E\cup\{0\}$. It is clear that $E\subseteq\Ima\pi^{\ast}$. Since $E$ is infinite, the direct sum ideal $I=\bigoplus\limits_{\mathfrak{p}\in E}R/\mathfrak{p}$ is a proper ideal of $S$ (i.e. $I\neq S$). 
Thus $I$ is contained in a prime ideal of $S$ which is a wild prime. Now, to conclude the assertion, it will be enough to show that if $P$ is a wild prime of $S$ then $\pi^{-1}(P)=0$. If $\pi^{-1}(P)$ is not zero, then $\pi^{-1}(P)=\mathfrak{q}$ for some $\mathfrak{q}\in E$. We may choose some $a\in\mathfrak{q}$ such that $a\notin\bigcup\limits_{\substack{\mathfrak{p}\in E,\\\mathfrak{p}\neq\mathfrak{q}}}\mathfrak{p}$. This yields that $\pi(a)=(a+\mathfrak{p})_{\mathfrak{p}\in E}\in P$. If $\mathfrak{p}\neq\mathfrak{q}$ then $a+\mathfrak{p}$ is a nonzero element of the field $R/\mathfrak{p}$. Then consider the sequence $x=(x_{\mathfrak{p}})$ in $S$ where $x_{\mathfrak{q}}=1$ and $x_{\mathfrak{p}}=(a+\mathfrak{p})^{-1}$ for all $\mathfrak{p}\in E$ with $\mathfrak{p}\neq\mathfrak{q}$. Then
$1-e_{\mathfrak{q}}=x\pi(a)\in P$ which is a contradiction where $e_{\mathfrak{q}}$ is the $\mathfrak{q}$-th unit idempotent of $S$. \\
(ii): The inclusion $\Ima\pi^{\ast}\subseteq c\ell(E)$ follows from Theorem \ref{Theorem 9 F closure}. It is easy to check that $c\ell(E)\subseteq E\cup\{0\}$, because $R$ has the infinite prime avoidance property. It is obvious that $E\subseteq\Ima\pi^{\ast}$. Similarly above, since $E$ is infinite, so $S$ has wild primes. Thus to conclude the assertion, it suffices to show that if $P$ is a wild prime of $S$ then $\pi^{-1}(P)=0$. If $\pi^{-1}(P)\neq0$ then
$\pi^{-1}(P)=\mathfrak{q}$ for some $\mathfrak{q}\in E$. We may choose some $a\in\mathfrak{q}$ such that $a\notin\bigcup\limits_{\substack{\mathfrak{p}\in E,\\\mathfrak{p}\neq\mathfrak{q}}}\mathfrak{p}$. This yields that $\pi(a)=(a/1)_{\mathfrak{p}\in E}\in P$. Consider the sequence $x=(x_{\mathfrak{p}})$ in $S$ where $x_{\mathfrak{q}}=0$ and $x_{\mathfrak{p}}=1/a$ for all $\mathfrak{p}\in E$ with $\mathfrak{p}\neq\mathfrak{q}$. Then $1-e_{\mathfrak{q}}=x\pi(a)\in P$ which is a contradiction.
\end{proof}

Note that every PID satisfies the hypothesis of the above theorem. In fact, the ideal class group of a PID is zero, so it has torsion. It is also well known that every Abelian group is isomorphic to the ideal class group of a Dedekind domain (see \cite[Theorem 7]{Claborn}). Hence, Dedekind domains with torsion ideal class groups are ubiquitous. For instance, if $n\geqslant1$ then there exists a Dedekind domain whose class group is isomorphic to the torsion additive group $\mathbb{Z}_{n}=\mathbb{Z}/n\mathbb{Z}$ (in fact, if $R$ is a ring of nonzero characteristic, then the additive group of $R$ has torsion, and the exponent of this group is the characteristic of $R$). \\

The following result is the dual of Theorem \ref{Theorem 11 ii}.

\begin{theorem}\label{Theorem 13-TH} Let $(R,\mathfrak{m})$ be a reduced local P.Z. ring with $\dim(R)=1$ and $E$ an infinite set of minimal primes of $R$. Then the following assertions hold. \\
$\mathbf{(i)}$ $\Ima\pi^{\ast}=E\cup\{\mathfrak{m}\}$ and for every wild prime $P$ of $S:=\prod\limits_{\mathfrak{p}\in E}R_{\mathfrak{p}}$ we have $\pi^{-1}(P)=\mathfrak{m}$ where $\pi:R\rightarrow S$ is the canonical ring map. \\
$\mathbf{(ii)}$ $\Ima\pi^{\ast}=\overline{E}=E\cup\{\mathfrak{m}\}$ and for every wild prime $P$ of $S:=\prod\limits_{\mathfrak{p}\in E}R/\mathfrak{p}$ we have $\pi^{-1}(P)=\mathfrak{m}$ where $\pi:R\rightarrow S$ is the canonical ring map.
\end{theorem}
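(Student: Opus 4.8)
The plan is to handle both parts by a single mechanism, using that $\dim(R)=1$ together with locality forces $\Spec(R)=\Min(R)\cup\{\mathfrak m\}$ (a prime properly containing a minimal prime would, if different from $\mathfrak m$, produce a chain of length $2$). So every contraction $\pi^{-1}(P)$ is either a minimal prime or $\mathfrak m$, and the whole problem reduces to identifying which minimal primes occur and to showing the wild primes all contract to $\mathfrak m$.

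First I would record the facts that let the two products be treated uniformly. Since $R$ is reduced and each $\mathfrak p\in E$ is minimal, $R_{\mathfrak p}=\kappa(\mathfrak p)$ is a field with $\ker(R\to R_{\mathfrak p})=\mathfrak p$; hence in part (i) the ring $S$ is a product of fields, and in both parts $\ker\pi=\bigcap_{\mathfrak p\in E}\mathfrak p$. Therefore every prime $P$ of $S$ has $\pi^{-1}(P)\supseteq\bigcap_{\mathfrak p\in E}\mathfrak p$, and here the P.Z. hypothesis enters decisively: infinite prime absorbance applied to the family $(\mathfrak p)_{\mathfrak p\in E}$ yields some $\mathfrak p_0\in E$ with $\mathfrak p_0\subseteq\pi^{-1}(P)$. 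As $\mathfrak p_0$ is minimal and $\dim(R)=1$, this forces $\pi^{-1}(P)\in\{\mathfrak p_0,\mathfrak m\}$. Combined with $E\subseteq\Ima\pi^{\ast}$ (the tame prime at $\mathfrak p$ contracts to $\mathfrak p$), this already gives $\Ima\pi^{\ast}\subseteq E\cup\{\mathfrak m\}$; the same absorbance argument applied to a minimal prime of $V(\bigcap_{\mathfrak p\in E}\mathfrak p)=\overline E$ shows $\overline E=E\cup\{\mathfrak m\}$, which is the Zariski statement needed in part (ii).

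The decisive step, which I expect to be the main obstacle, is that every wild prime contracts to $\mathfrak m$. Since $E$ is infinite the direct sum ideal of $S$ is proper, so wild primes exist; fix one, say $P$, and write $\mathfrak p_0\subseteq\pi^{-1}(P)$ as above. Assume for contradiction $\pi^{-1}(P)=\mathfrak p_0$. The crux is to use absorbance as the exact order-dual of prime avoidance: because $\mathfrak p_0$ is minimal it cannot contain $\bigcap_{\mathfrak p\in E,\,\mathfrak p\neq\mathfrak p_0}\mathfrak p$ (otherwise P.Z. would put some other $\mathfrak p\in E$ inside the minimal prime $\mathfrak p_0$), so I may pick $b\in\bigcap_{\mathfrak p\in E,\,\mathfrak p\neq\mathfrak p_0}\mathfrak p$ with $b\notin\mathfrak p_0$. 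Then $\pi(b)$ is supported exactly at the coordinate $\mathfrak p_0$, so $\pi(b)=\pi(b)\,e_{\mathfrak p_0}$ lies in the ideal generated by the unit idempotent $e_{\mathfrak p_0}$, and $e_{\mathfrak p_0}\in P$ since $P$ is wild; hence $\pi(b)\in P$. But $b\notin\mathfrak p_0=\pi^{-1}(P)$ means $\pi(b)\notin P$, a contradiction. Thus $\pi^{-1}(P)=\mathfrak m$.

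Assembling: the existence of a wild prime and the previous step give $\mathfrak m\in\Ima\pi^{\ast}$, so $\Ima\pi^{\ast}=E\cup\{\mathfrak m\}$ in both parts, and in part (ii) this equals $\overline E$ by the computation above (or by $\Ima\pi^{\ast}\subseteq\overline E$ from Theorem \ref{Theorem 8 Z closure}). This is the literal order-dual of the proof of Theorem \ref{Theorem 11 ii}, with maximal ideals replaced by minimal primes, the zero ideal by $\mathfrak m$, the field quotients $R/\mathfrak p$ by the field localizations $R_{\mathfrak p}$, and avoidance by absorbance; the only genuinely delicate point is this last substitution, which is why the separating element $b$ lies in all but one of the primes of $E$ and the contradiction is carried by $e_{\mathfrak p_0}$ rather than by $1-e_{\mathfrak p_0}$.
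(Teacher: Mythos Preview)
Your proof is correct and follows essentially the same strategy as the paper: use the P.Z.\ property to force every contraction into $E\cup\{\mathfrak m\}$, and then for a wild prime $P$ pick a separating element $b\in\bigcap_{\mathfrak p\ne\mathfrak p_0}\mathfrak p\setminus\mathfrak p_0$ to derive a contradiction via the unit idempotent $e_{\mathfrak p_0}$.

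The only noteworthy difference is presentational. You exploit the observation that $R_{\mathfrak p}=\kappa(\mathfrak p)$ for $\mathfrak p$ minimal in a reduced ring, which lets you treat parts (i) and (ii) with a single argument and phrase the contradiction uniformly as $\pi(b)=\pi(b)e_{\mathfrak p_0}\in P$ while $\pi(b)\notin P$. The paper instead keeps the two products separate and reaches the contradiction through slightly different auxiliary computations (an invertible sequence $x$ with $x\pi(a)=e_{\mathfrak q}$ in (i), and the identity $(1-e_{\mathfrak q})\pi(a)=0$ in (ii)). These are equivalent repackagings of the same idea; your version is arguably tidier.
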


\begin{proof} (i): First we show that $\Ima\pi^{\ast}\subseteq E\cup\{\mathfrak{m}\}$. Let $P$ be a prime ideal of $S$ and setting $\mathfrak{q}:=\pi^{-1}(P)$. Suppose $\mathfrak{q}\notin E\cup\{\mathfrak{m}\}$.
Since $R$ is a $P.Z.$ ring, so there exists some $a\in\bigcap\limits_{\mathfrak{p}\in E}\mathfrak{p}$ such that $a\notin\mathfrak{q}$.
For each $\mathfrak{p}\in E$, $R_{\mathfrak{p}}$ is a reduced ring and so $a/1\in\mathfrak{p}R_{\mathfrak{p}}=0$. This means that $\pi(a)=(a/1)_{\mathfrak{p}\in E}$ is the zero sequence. But $\pi(a)\notin P$ which is a contradiction. Thus $\Ima\pi^{\ast}\subseteq E\cup\{\mathfrak{m}\}$. It is clear that $E\subseteq\Ima\pi^{\ast}$. Since $E$ is infinite, $S$ has wild primes. Hence, to conclude the assertion, it suffices to show that if $P$ is a wild prime of $S$ then $\pi^{-1}(P)=\mathfrak{m}$. If $\pi^{-1}(P)\neq\mathfrak{m}$ then $\pi^{-1}(P)=\mathfrak{q}$ for some $\mathfrak{q}\in E$. Since $R$ is a P.Z. ring, so we may choose some $a\in\bigcap\limits_{\substack{\mathfrak{p}\in E,\\\mathfrak{p}\neq\mathfrak{q}}}\mathfrak{p}$ such that $a\notin\mathfrak{q}$.
Thus $\pi(a)=(a/1)_{\mathfrak{q}\in E}\notin P$ and $a/1$ is invertible in $R_{\mathfrak{q}}$. Also, $a/1=0$ in $R_{\mathfrak{p}}$ for all $\mathfrak{p}\in E$ with $\mathfrak{p}\neq\mathfrak{q}$. Then consider the sequence $x=(x_{\mathfrak{q}})_{\mathfrak{q}\in E}$ in $S$ with $x_{\mathfrak{q}}=1/a$ and $x_{\mathfrak{p}}=1$ for all $\mathfrak{p}\in E$ with $\mathfrak{p}\neq\mathfrak{q}$. Thus $x$ is invertible in $S$ and so $x\notin P$. It follows that the $\mathfrak{q}$-th unit idempotent $e_{\mathfrak{q}}=x\pi(a)\notin P$ which is a contradiction. \\
(ii): The inclusion $\Ima\pi^{\ast}\subseteq\overline{E}$ follows from Theorem \ref{Theorem 8 Z closure}. Since $R$ is a P.Z. ring, so the inclusion $\overline{E}\subseteq E\cup\{\mathfrak{m}\}$ is also clear. To conclude the assertion, it suffices to show that if $P$ is a wild prime of $S$ then $\pi^{-1}(P)=\mathfrak{m}$. Suppose $\pi^{-1}(P)=\mathfrak{q}$ for some $\mathfrak{q}\in E$. We may choose some $a\in\bigcap\limits_{\substack{\mathfrak{p}\in E,\\\mathfrak{p}\neq\mathfrak{q}}}\mathfrak{p}$ such that $a\notin\mathfrak{q}$. Thus $\pi(a)=(a+\mathfrak{p})_{\mathfrak{p}\in E}\notin P$. Also, $1-e_{\mathfrak{q}}\notin P$. This yields that $0=(1-e_{\mathfrak{q}})\pi(a)\notin P$ which is a contradiction.
\end{proof}

As a sort of the dual of Corollary \ref{Lemma i1}, we have the following result.

\begin{corollary}\label{Corollary 4 fd} If $(R,\mathfrak{m})$ is a local P.Z. ring with $\dim(R)=1$, then every infinite subset of $\Spec(R)$ is flat dense.
\end{corollary}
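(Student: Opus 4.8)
The plan is to invoke Lemma \ref{Prop 1i1}(ii) and thereby reduce the corollary to a single finiteness statement about principal basic opens, which I will then extract from the P.Z. hypothesis together with the quasi-compactness of $D(a)$.

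Since $R$ is local, its set of nonunits is exactly $\mathfrak{m}$, so $R\setminus U(R)=\mathfrak{m}$. By Lemma \ref{Prop 1i1}(ii) it therefore suffices to prove that $D(a)$ is finite for every $a\in\mathfrak{m}$; we may assume $a$ is not nilpotent, since otherwise $D(a)=\emptyset$. Because $\dim(R)=1$ and $R$ is local, every prime of $R$ other than $\mathfrak{m}$ is minimal (a prime that is neither minimal nor maximal would give a chain of length $2$). As $a\in\mathfrak{m}$ forces $\mathfrak{m}\notin D(a)$, it follows that $D(a)=\{\mathfrak{p}\in\Min(R):a\notin\mathfrak{p}\}$; write $B$ for this set and suppose, for contradiction, that $B$ is infinite.

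The separation step is where the P.Z. hypothesis enters. Fix $\mathfrak{p}\in B$ and consider the family $(\mathfrak{q})_{\mathfrak{q}\in B\setminus\{\mathfrak{p}\}}$. If its intersection were contained in $\mathfrak{p}$, then infinite prime absorbance would give $\mathfrak{q}\subseteq\mathfrak{p}$ for some $\mathfrak{q}\in B\setminus\{\mathfrak{p}\}$; but distinct minimal primes are incomparable, a contradiction. Hence $\bigcap_{\mathfrak{q}\in B\setminus\{\mathfrak{p}\}}\mathfrak{q}\not\subseteq\mathfrak{p}$, so I may pick $b_{\mathfrak{p}}\in\bigcap_{\mathfrak{q}\in B\setminus\{\mathfrak{p}\}}\mathfrak{q}$ with $b_{\mathfrak{p}}\notin\mathfrak{p}$. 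By construction $\mathfrak{p}\in D(b_{\mathfrak{p}})$ while every other member of $B$ lies in $V(b_{\mathfrak{p}})$, so $D(b_{\mathfrak{p}})\cap B=\{\mathfrak{p}\}$.

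Finally I would upgrade this pointwise separation to finiteness using quasi-compactness. The opens $\{D(b_{\mathfrak{p}})\}_{\mathfrak{p}\in B}$ cover $B=D(a)$, and $D(a)$, being a principal basic open of $\Spec(R)$, is quasi-compact; hence finitely many of them, say $D(b_{\mathfrak{p}_1}),\dots,D(b_{\mathfrak{p}_n})$, already cover $D(a)$. Intersecting with $B$ and using $D(b_{\mathfrak{p}_i})\cap B=\{\mathfrak{p}_i\}$ yields $B\subseteq\{\mathfrak{p}_1,\dots,\mathfrak{p}_n\}$, contradicting the infinitude of $B$. Thus $D(a)$ is finite and Lemma \ref{Prop 1i1}(ii) finishes the proof. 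I expect the genuine content to be concentrated in the last step: the P.Z. property only yields the pairwise separators $b_{\mathfrak{p}}$, and it is the quasi-compactness of $D(a)$ that converts infinitely many one-point separations into an actual finite bound. It is worth noting that neither reducedness nor any Noetherian hypothesis is needed in this argument.
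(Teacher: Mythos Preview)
Your proof is correct and follows a genuinely different route from the paper. The paper derives the corollary from the machinery built earlier: it first reduces to the reduced case, then applies Theorem~\ref{Theorem 13-TH}(i) to identify $\Ima\pi^{\ast}=E\cup\{\mathfrak{m}\}$ for the canonical map $\pi:R\to\prod_{\mathfrak{p}\in E}R_{\mathfrak{p}}$, and finally invokes Theorem~\ref{Lemma 2ii} (equivalently Theorem~\ref{Theorem 9 F closure}) to conclude $\mathfrak{m}\in c\ell(E)$, whence $c\ell(E)=\Spec(R)$ by stability under generalization. Your argument bypasses the infinite direct product entirely: you reduce via Lemma~\ref{Prop 1i1}(ii) to showing each $D(a)$ with $a\in\mathfrak{m}$ is finite, manufacture one-point separators $b_{\mathfrak{p}}$ from the P.Z. hypothesis, and then use quasi-compactness of $D(a)\cong\Spec(R_{a})$ to extract a finite subcover. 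Your approach is more elementary and self-contained (no reduction to the reduced case, no patch topology, no product rings), while the paper's proof has the virtue of exhibiting the corollary as a direct consequence of its main structural theorems. The quasi-compactness step is indeed the crux of your argument and is perfectly sound; the P.Z. property alone would only give discreteness of $D(a)$ in the subspace topology, not finiteness.
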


\begin{proof} Let $E$ be an infinite subset of $\Spec(R)$.
If $\mathfrak{m}\in E$ then $c\ell(E)=\Spec(R)$, because $c\ell(E)$ is stable under the generalization. Now assume $\mathfrak{m}\notin E$.
Without loss of generality, we may assume $R$ is reduced, because the canonical ring map $R\rightarrow R/\mathfrak{N}$ induces a homeomorphism between the corresponding prime spectra (with respect to the flat topology) and $R/\mathfrak{N}$ is still a local P.Z. ring of Krull dimension one.
Then by Theorem \ref{Theorem 13-TH}(i), $\Ima\pi^{\ast}=E\cup\{\mathfrak{m}\}$ where $\pi:R\rightarrow\prod\limits_{\mathfrak{p}\in E}R_{\mathfrak{p}}$ is the canonical ring map. If $\mathfrak{p}\in E$ then $R_{\mathfrak{p}}$ is a field. Then by Theorem \ref{Lemma 2ii}
(or, by Theorem \ref{Theorem 9 F closure}), $\Ima(\pi^{\ast})\subseteq c\ell(E)$. Thus $\mathfrak{m}\in c\ell(E)$. But $c\ell(E)$ is stable under generalization, and hence $c\ell(E)=\Spec(R)$.
\end{proof}

In the next result we construct an explicit example of a  reduced local P.Z. ring with Krull dimension one and infinitely many minimal primes. Note that in the following result, the index set $S$ is nonempty.

\begin{theorem}\label{Theorem supplement} Let $T=K[x_{i}: i\in S]$ with $K$ a ring, $I=(x_{i}x_{k}:i,k\in S, i\neq k)$ and $I_{k}=(x_{i}: i\in S, i\neq k)$ for all $k\in S$. Then the following assertions hold. \\
$\mathbf{(i)}$ $I=\bigcap\limits_{k\in S}I_{k}$. In particular, $K$ is reduced if and only if $T/I$ is reduced. \\
$\mathbf{(ii)}$ $K$ is an integral domain if and only if $\Min(I)=\{I_{k}: k\in S\}$. \\
$\mathbf{(iii)}$ If $K$ is a Noetherian domain, then $\dim(T/I)=\dim(K)+1$. \\
$\mathbf{(iv)}$ If $S$ is infinite and $K$ is a field, then $R:=(T/I)_{\mathfrak{m}}$ is a reduced local P.Z. ring with $\dim(R)=1$ and infinitely many minimal primes where $\mathfrak{m}=(x_{i}: i\in S)$.
\end{theorem}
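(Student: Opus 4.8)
The plan is to exploit the rigid monomial structure of $A:=T/I$. Since $x_ix_k\in I$ for all $i\neq k$, the only monomials surviving in $A$ are the constant $1$ and the pure powers $x_k^{n}$ with $k\in S$, $n\geq1$; hence every element of $A$ has a unique normal form $c+\sum_k g_k(x_k)$ with $c\in K$ and $g_k\in x_kK[x_k]$ (almost all zero). The projections $\phi_k\colon A\to A/I_k\cong K[x_k]$ (kill every $x_i$ with $i\neq k$) then assemble into an injection $A\hookrightarrow\prod_{k\in S}K[x_k]$, and under this description $I_k=\bigoplus_{i\neq k}x_iK[x_i]$ while $\mathfrak{m}=\bigoplus_k x_kK[x_k]$ is exactly the set of elements with vanishing constant term, with $A/\mathfrak{m}\cong K$. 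I would record these normal forms first, since every later step reads off from them. For $(\mathbf{i})$ I would check both inclusions directly: each generator $x_ix_k$ ($i\neq k$) lies in $I_l$ for every $l$ (one of $i,k$ differs from $l$), giving $I\subseteq\bigcap_l I_l$; conversely a polynomial lying in every $I_l$ has zero constant term and no pure-power monomial in any single variable, so each of its monomials involves two distinct variables and hence lies in $I$. The reduced claim then follows because $A\hookrightarrow\prod_k K[x_k]$ makes $A$ a subring of a product of reduced rings when $K$ is reduced, while $K\hookrightarrow A$ as the constants gives the converse.

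For $(\mathbf{ii})$ the engine is a classification of the primes over $I$: if $P\in\Spec T$ contains $I$, then primeness applied to $x_ix_k\in P$ forces at most one index $k$ with $x_k\notin P$. Thus $V(I)$ splits into the primes containing all the variables (equivalently containing $\mathfrak{m}$) and, for each $k$, the primes containing $I_k$ but not $x_k$. When $K$ is a domain each $I_k$ is prime (as $A/I_k\cong K[x_k]$), and the classification shows no prime can sit strictly below an $I_k$, so $\Min(I)=\{I_k:k\in S\}$; conversely, if $\Min(I)=\{I_k\}$ then each $I_k$ is prime, forcing $K[x_k]$, hence $K$, to be a domain. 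For $(\mathbf{iii})$ I would use $\dim(T/I)=\sup_{\mathfrak{q}\in\Min(I)}\dim(T/\mathfrak{q})$ together with $T/I_k\cong K[x_k]$ and the Noetherian identity $\dim K[x]=\dim K+1$.

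Part $(\mathbf{iv})$ is the substantive one, and the P.Z. property is where the real work lies. With $K$ a field, parts $(\mathbf{i})$, $(\mathbf{iii})$, $(\mathbf{ii})$ already give $A$ reduced, $\dim(T/I)=1$, and infinitely many minimal primes $I_k$. The key step is to pin down $\Spec R$ exactly: refining the classification of $(\mathbf{ii})$ shows the only prime of $A$ strictly below $\mathfrak{m}$ is a minimal prime $I_k$ (inside $A/I_k\cong K[x_k]$ the only prime under $(x_k)$ is $(0)$), so after localizing $\Spec R=\{\mathfrak{q}_k:=I_kR:k\in S\}\cup\{\mathfrak{m}R\}$, where the $\mathfrak{q}_k$ are minimal and pairwise incomparable and $\mathfrak{m}R$ is the unique maximal ideal lying above every one of them; this yields both $\dim R=1$ and the persistence of infinitely many minimal primes. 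For P.Z., given a prime $\mathfrak{p}\supseteq\bigcap_i\mathfrak{p}_i$ of $R$: if $\mathfrak{p}=\mathfrak{m}R$ the statement is immediate since $\mathfrak{m}R$ contains every prime; if $\mathfrak{p}=\mathfrak{q}_j$ I would argue by contraposition using the element $x_j$, which lies in $\mathfrak{q}_l$ for every $l\neq j$ and in $\mathfrak{m}R$ but not in $\mathfrak{q}_j$ (because $\mathfrak{q}_j$ contracts to $I_j$ in $A$ and $x_j\notin I_j$). Hence if $\mathfrak{q}_j$ were not among the $\mathfrak{p}_i$, then $x_j\in\bigcap_i\mathfrak{p}_i$ yet $x_j\notin\mathfrak{q}_j$, contradicting $\bigcap_i\mathfrak{p}_i\subseteq\mathfrak{q}_j$; so $\mathfrak{q}_j$ itself occurs in the family, as required.

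I expect the main obstacle to be the localization bookkeeping in $(\mathbf{iv})$: namely proving that $\Spec R$ consists of exactly the $\mathfrak{q}_k$ and $\mathfrak{m}R$ (in particular that no extra maximal ideals of $A$ survive the localization, and that $x_j\notin\mathfrak{q}_j$ via the contraction $\mathfrak{q}_j\cap A=I_j$). The clean element-chasing proof of P.Z. depends entirely on having that list of primes correct, so that is the part I would write out most carefully.
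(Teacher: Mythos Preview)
Your proposal is correct and follows essentially the same route as the paper: the same monomial argument for (i), the same prime classification for (ii), the same dimension formula via $T/I_k\cong K[x_k]$ for (iii), and the same witness $x_j$ for the P.Z.\ property in (iv) (the paper packages this as the ideal $(x_i:i\in S\setminus S')\subseteq I_d$ rather than a single element, but the content is identical). Your explicit normal form and the embedding $A\hookrightarrow\prod_k K[x_k]$ are a slight repackaging of what the paper does at the level of $T$, but there is no substantive difference.
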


\begin{proof} (i): It is clear that $I\subseteq I_{k}$ for all $k\in S$. Thus $I\subseteq\bigcap\limits_{k\in S}I_{k}$.
Now take $f\in\bigcap\limits_{k\in S}I_{k}$. To see $f\in I$ it suffices to show that in the expansion of $f$ for every monomial which appears, at least two distinct variables with positive powers are involved. If not, then there exist $g(x_{d})\in K[x_{d}]$ (for some $d\in S$) and $h\in T$ such that $f=g(x_{d})+h$ and there is no monomial of the form $x^{n}_{d}$ (with $n\geqslant0$) in the expansion of $h$. It follows that $h\in I_{d}$ and so $g(x_{d})\in I_{d}$ which is a contradiction, since for each nonzero $r\in R$ and $n\geqslant0$, then $rx^{n}_{d}\notin I_{d}$. Hence, $g(x_{d})=0$. Therefore $I=\bigcap\limits_{k\in S}I_{k}$. If $K$ is reduced then $T/I_{k}\simeq K[x_{k}]$ is also reduced, and so $I_{k}$ is a radical ideal for all $k\in S$. Thus the ideal $I$ is also radical, because the intersection of every family of radical ideals is a radical ideal. This yields that $T/I$ is reduced. Conversely, if $T/I$ is reduced then $I$ is a radical ideal. Now if $a\in K$ is nilpotent then $a^{n}=0\in I$ for some $n\geqslant1$, and so $a\in\sqrt{I}=I$. It follows that $a=0$. \\
(ii): If $K$ is an integral domain, then $I_{k}$ is a prime ideal of $T$ for all $k$. If $\mathfrak{p}$ is a prime ideal of $T$ with $I\subseteq\mathfrak{p}\subseteq I_{k}$ for some $k$, then for each $i$ with $i\neq k$ we have $x_{i}x_{k}\in I$. Since $x_{k}\notin\mathfrak{p}$, so $x_{i}\in\mathfrak{p}$. Thus $\mathfrak{p}=I_{k}$. To see the reverse inclusion, take $\mathfrak{p}\in\Min(I)$. We may assume $|S|\geqslant2$. Then $\mathfrak{p}\neq(x_{k}: k\in S)$. Thus $x_{k}\notin\mathfrak{p}$ for some $k$. But for each $i$ with $i\neq k$ we have $x_{i}x_{k}\in I$ and so $x_{i}\in\mathfrak{p}$. This shows that $I_{k}\subseteq\mathfrak{p}$ and so $I_{k}=\mathfrak{p}$. Conversely, since $S$ is nonempty so we may choose some $k\in S$. By hypothesis, $I_{k}$ is a prime ideal of $T$ thus $T/I_{k}\simeq K[x_{k}]$ is an integral domain, and so every its subring, especially $K$, is an integral domain. \\
(iii): We know that the Krull dimension of any ring $R$ is the supremum of all $\dim(R/\mathfrak{p})$ where $\mathfrak{p}$ ranges over the set of minimal primes of $R$. Also, since $K$ is a Noetherian ring thus $T/I_{k}\simeq K[x_{k}]$ has the Krull dimension $\dim(K)+1$.
Now by using these observations and (ii), we get that $\dim(T/I)=\dim(K)+1$. \\
(iv): If $K$ is a field then by using (iii), we have $1\leqslant\dim(R)\leqslant\dim(T/I)=1$. Hence, the non-maximal prime ideals of the local ring $R\simeq T_{\mathfrak{m}}/IT_{\mathfrak{m}}$ are precisely the minimal primes $P_{k}:=I_{k}T_{\mathfrak{m}}/IT_{\mathfrak{m}}$ of $R$ with $k\in S$. Since $S$ is infinite, thus $R$ has infinitely many minimal primes.
By (i), $T/I$ and so every its localization, especially $R$, are reduced.
It remains to show that $R$ is a P.Z. ring. Suppose $\bigcap\limits_{k\in S'}P_{k}\subseteq P=P_{d}$ where $S'$ is a nonempty subset of $S$ and $d\in S$ (the case of ``$P=$ the maximal ideal'' is clear). It suffices to show that $d\in S'$. We have $\bigcap\limits_{k\in S'}P_{k}=(\bigcap\limits_{k\in S'}I_{k}T_{\mathfrak{m}})/IT_{\mathfrak{m}}$ and clearly $\mathfrak{p}T_{\mathfrak{m}}\subseteq\bigcap\limits_{k\in S'}I_{k}T_{\mathfrak{m}}$ where $\mathfrak{p}=(x_{i}: i\in S\setminus S')$. It follows that $\mathfrak{p}\subseteq I_{d}$. This yields that $d\in S'$ which completes the proof.
\end{proof}

In addition to the above result, Hochster's characterization of spectral spaces provides us further examples of (reduced) local P.Z. rings with Krull dimension one and infinitely many minimal primes. In fact, by applying \cite[Proposition 5.5]{Tarizadeh-Chen} to every (nonfield) Dedekind domain $A$ with torsion class group we obtain a reduced local P.Z. ring $R$  such that $\Spec(A)$ endowed with the Zariski topology is isomorphic to $\Spec(R)$ equipped with the flat topology, and this homeomorphism reverses prime orders. In particular, $\dim(R)=\dim(A)=1$. If moreover, $A$ has infinitely many maximal ideals, then the resulting ring $R$ will have infinitely many minimal primes. 

\begin{remark}\label{Remark 6 fc} Let $R$ be a ring and $E$ a subset of $\Spec(R)$. Then it is clear that $E\subseteq\Ima\pi^{\ast}$ where $\pi:R\rightarrow\prod\limits_{\mathfrak{p}\in E}R_{\mathfrak{p}}$ is the canonical ring map. Then using Theorem \ref{Theorem 9 F closure}, we have  $\Ima\pi^{\ast}=c\ell(E)$ if and only if $\Ima\pi^{\ast}$ is stable under generalization (e.g. $\pi$ has the going-down property). If $E$ is a finite set then $c\ell(E)=\Ima\pi^{\ast}=\bigcup\limits_{\mathfrak{p}\in E}\Lambda(\mathfrak{p})$. But here we show with an example that the inclusion $\Ima\pi^{\ast}\subseteq c\ell(E)$ can be strict for some infinite subsets. To that end, let $(R,\mathfrak{m})$ be a reduced local P.Z. ring with $\dim(R)=1$
and infinitely many minimal primes (see Theorem \ref{Theorem supplement}), and let $E$ be a proper infinite set of minimal primes of $R$. Then by Corollary \ref{Corollary 4 fd}, $E$ is flat dense in $\Spec(R)$. But by Theorem \ref{Theorem 13-TH}(i), $\Ima\pi^{\ast}=E\cup\{\mathfrak{m}\}$. Hence, the set $\Ima\pi^{\ast}$ is strictly contained in $c\ell(E)=\Spec(R)$. 
\end{remark}


\begin{thebibliography}{10}
\bibitem{Tarizadeh-Chen 2}
J. Chen and A. Tarizadeh, On the ideal avoidance property, J. Pure Appl. Algebra 228(3) (2024) 107500.
\bibitem{Claborn}
L. Claborn, Every abelian group is a class group. Pacific J. Math., 18 (1966) 219-222.
\bibitem{Reis-Visw}
C.M. Reis, T.M. Viswanathan, A compactness property for prime ideals in Noetherian rings, Proc. Amer. Math. Soc. 25(2) (1970) 353-356.
\bibitem{Tarizadeh-Chen}
A. Tarizadeh and J. Chen, Avoidance and absorbance, J. Algebra 582 (2021) 88-99.
\bibitem{Tarizadeh 4}
A. Tarizadeh, A new approach to the patch and flat topologies
on a spectral space with applications, Commun. Algebra 49(12) (2021) 5322-5328.
\bibitem{Tarizadeh 3}
A. Tarizadeh, Flat topology and its dual aspects, Commun. Algebra 47(1) (2019) 195-205.
\bibitem{Tarizadeh 2}
A. Tarizadeh, Zariski compactness of minimal spectrum and flat compactness of maximal spectrum, J. Algebra Appl. 18(11) (2019) 1950202.
\bibitem{Tarizadeh-Shir}
A. Tarizadeh and N. Shirmohammadi, Tame and wild primes in direct products of commutative rings, to appear in Proc. Edinb. Math. Soc.,
https://doi.org/10.48550/arXiv.2208.08828, (2024). 
\end{thebibliography}
\end{document}